\documentclass[aip,jmp]{revtex4-1}
\usepackage{graphicx}
\usepackage{dcolumn}
\usepackage{bm}
\usepackage{amssymb}
\usepackage{amsmath,amsthm}
\usepackage{mathptmx}
\usepackage{tikz}
\usepackage[all]{xy}
\usetikzlibrary{arrows}
\usetikzlibrary{decorations.markings}

\makeatletter

\newcommand{\Rmnum}[1]{\expandafter\@slowromancap\romannumeral #1@}
\makeatother
\newfam\msbfam
\font\twelmsb=msbm10 at 12pt 
\font\sevenmsb=msbm10 at 7pt \font\fivemsb=msbm10 at 5pt
\textfont\msbfam=\twelmsb
\scriptfont\msbfam=\sevenmsb \scriptscriptfont\msbfam=\fivemsb

\newtheorem{theorem}{Theorem}[section]
\newtheorem{definition}[theorem]{Definition}
\newtheorem{proposition}[theorem]{Proposition}

\def\N{\mathbb{N}}

\def\Z{\mathbb{Z}}

\def\CC{\mathcal{C}}

\def\A{\mathcal{A} }

\allowdisplaybreaks
\numberwithin{equation}{section}

\begin{document}
\preprint{}

\title{Quantum cluster algebra structure on the finite dimensional representations of $U_q(\widehat{sl_{2}})$}

\author{Hai-Tao Ma, Yan-Min Yang, Zhu-Jun Zheng}
\email{zhengzj@scut.edu.cn}
\affiliation{Department of Mathematics\\
 South China University of
 Technology\\ Guangzhou 510641, P. R. China.}

\begin{abstract}
In this paper, 
we give a quantum cluster algebra structure on the deformed Grothendieck ring of $\CC_{n}$, where $\CC_{n}$ is  a full subcategory of finite dimensional representations of $U_q(\widehat{sl_{2}})$ defined in section II.
\end{abstract}

\keywords{ Quantum cluster algebra, Deformed Grothendieck ring}

\maketitle

\section{Introduction}

Cluster algebras were introduced by Fomin and Zelevinsky\cite{FZ0}, and they played more and more important role in representation theory and other areas. In 2010, Hernandez and Leclerc \cite{HL0} posted a conjecture on the monoidal categorification of cluster algebras \cite{FZ0} and gave a model of monoidal category for certain cluster algebras. They  proved the case $n = 1$ for the Lie algebras of type $AD$\cite{HL0,HL1}. Almost at the same time, Nakajima\cite{Na1} used the theory of perverse sheaves and q,t-characters to prove the case $n = 1$ for the Lie algebras of type $ADE$. In 2012, Qin \cite{Qin} generalized Nakajima's geometric approach and obtained monoidal categorifications of the cluster algebras associated to any acyclic quiver. Recently, Yang and Zheng\cite{YZ1} proved the conjecture for the case $A_3, n = 2$.

In 2010, Nakajima\cite{Na1} asked that if there is a quantum cluster algebra structure on the deformed Grothendieck ring of full subcategory of the finite dimensional representation of the quantum loop algebra. In 2012, Qin and Yoshiyuki Kimura \cite{QY1} obtained a deformed monoidal categorifications of acyclic quantum cluster algebras with specific coefficients through geometric approach.

    In this paper, by the algebraic definition of q, t-characters given by Hernandez\cite{H0}, we want to give an answer of Nakajima's problem of $U_q(\widehat{sl_{2}})$ case. That is , we proved that for any $n \in \N$, there is a quantum cluster algebra structure on the deformed Grothendieck ring of the full subcategory $\CC_{n}$.

    The paper is organized as follows. In Section 2, we present necessary definitions and
facts of quantum cluster algebras.
In Section 3, we give the main result and the detail of the proof.

\section{Definitions and Notations}\label{s2}
In this section, we give a simplified introduction to the theory of quantum cluster algebras \cite{BZ0} and finite dimensional representations of quantum affine algebras \cite{CP0,CP1,H0}.
\subsection{Quantum cluster algebra}

\begin{definition}
Let $\widetilde{B}$ be an $m \times n$ integer matrix with rows labeled by $[1,m]$ and columns labeled by an n-element subset $\textbf{ex} \subset [1, m]$. Let $\Lambda$ be a skew-symmetric $m \times m$ integer matrix with rows and columns labeled by $[1, m]$. We say that a pair $(\Lambda, \widetilde{B})$ is compatible if, for every $j \in \textbf{ex}$ and $i \in [1, m]$, we have
$$\sum_{ k = 1}^{m}b_{kj}\lambda_{ki} = \delta_{ij}d_{j}$$
for some integers $d_{j}(j \in \textbf{ex}).$
\end{definition}

Fix an index $k \in \textbf{ex}$ and a sign $\varepsilon \in \{\pm 1\},$ the matrix $\widetilde{B}^{'} = \mu_{k}(\widetilde{B})$ is defined as
$$\widetilde{B}^{'} = E_{\varepsilon}\widetilde{B}F_{\varepsilon},$$
where $E_{\varepsilon}$ is the $m \times m$ matrix with entries

$$ e_{ij} =\left\{\begin{array}{ll}
\delta_{ij} & \text{if $j \neq  k$,} \\[.05in]
-1
 & \text{if $i = j = k$,
 }  \\[.05in]
 max(0, -\varepsilon b_{ik}) &\text{if $i\neq j = k$,}
 \end{array}   \right.
$$
$F_{\varepsilon}$ is the $n \times n$ matrix with rows and columns labeled by
$\textbf{ex}$ with  entries 

$$ f_{ij} =\left\{\begin{array}{ll}
\delta_{ij} & \text{if $i \neq  k$,} \\[.05in]
-1
 & \text{if $i = j = k$,
 }  \\[.05in]
 max(0, \varepsilon b_{ik}) &\text{if $i = k \neq j$.}
 \end{array}   \right.
$$

Set $\Lambda^{'} = E_{\varepsilon}^{T} \Lambda E_{\varepsilon}.$
\begin{definition}
Let $(\Lambda, \widetilde{B})$ be a compatible pair, and $k \in \textbf{ex}$. We say that the compatible pair $(\Lambda^{'}, \widetilde{B}^{'})$ is obtained from $(\Lambda, \widetilde{B})$ by the mutation in direction k.
\end{definition}

Let $L$ be a lattice of rank m with a skew-symmetric bilinear form $\Lambda : L \times L \rightarrow \Z.$
The based quantum torus associated with $L$ is the $\Z[q^{\pm1/2}]$-algebra $T = T(\Lambda)$ with a distinguished $\Z[q^{\pm 1/2}]$-basis $\{X^{e}| e \in L\}$
and the multiplication given by
$X^{e}X^{f} = q^{\Lambda(e, f)/2}X^{e + f},$
$X^{0} = 1, (X^{e})^{-1} = X^{-e}.$
For a skew-field $F$ of $T$, a toric frame in $F$ is a map $M : \Z^{m} \rightarrow F \backslash \{0\}$ defined as
$M(c) = \varphi(X^{\eta(c)}),$
where $\varphi$ is an automorphism of $F$, and $\eta : \Z^{m} \rightarrow L$ is an isomorphism of lattices.

\begin{definition}
A quantum seed is a pair $(M, \widetilde{B})$, where
M is a toric frame in F; $\widetilde{B}$ is an $m \times n$ integer matrix with rows labeled by $[1, m]$ and columns labeled by an n-element subset $\textbf{ex} \subset [1, m];$ the pair$(\Lambda_{M}, \widetilde{B})$ is a compatible pair.
\end{definition}

Let $(M,\widetilde{B})$ be a quantum seed, and $(M^{'}, \widetilde{B}^{'})$ be the quantum seed obtained by the mutation in direction $ k \in \textbf{ex}$. For $i \in [1, m]$, let $X_{i} = M(e_{i})$ and $X_{i}^{'} = M^{'}(e_{i})$. Then $X_{i}^{'} = X_{i}$ for $i \neq k$, and $X_{k}^{'}$ is given by the following quantum analog of the exchange relation:
$$X_{k}^{'} = M(-e_{k} + \sum_{b_{ik} > 0}b_{ik}e_{i}) + M(- e_{k} + \sum_{b_{ik} < 0}b_{ik}e_{i}).$$

Two quantum seeds are mutation-equivalent if they can be obtained from each other by a sequence of quantum seed mutations. For a quantum seed $(M,\widetilde{B}),$ we denote by $\widetilde{\textbf{X}} = (X_{1}, X_{2},\cdots,X_{m})$ the corresponding "free generating set" in $F$ given by $X_{i} = M(e_{i})$. We call the subset$\textbf{X} = \{X_{j} | j \in \textbf{ex}\} \subset \widetilde{\textbf{X}}$ the cluster of the quantum seed $(M, \widetilde{B}),$
and set $\textbf{C} = \widetilde{\textbf{X}} - \textbf{X}$.

\begin{definition}
Let $S$ be a mutation-equivalence class of quantum seeds in $F$. The quantum cluster algebra associated with $S$ is the $\Z[q^{\pm1/2}]$-subalgebra of the ambient skew-field F, generated by the union of clusters of all seeds in $S$, together with the element of $\textbf{C}$.
\end{definition}

\subsection{Quantum loop algebra}
$U_q(\widehat{sl_{2}})$ is the corresponding quantum affine algebra with parameter $q\in C^*$ not a root of unity. $\CC$ is the category of finite-dimensional $U_q(\widehat{sl_{2}})$-representations of type 1.
For $\ell\in \Z_{\geq0}$,   $\CC_\ell$ be the full subcategory of $\CC$:
 for any $V$ of $\CC_{\ell}$, the roots of the Drinfeld polynomials of
every composition factor of $V$ belong to $\{q^{-2k}\mid 0\le k \le \ell\}$.

Denote the Grothendieck ring of $\CC_{\mathbb{\ell}}$ by $R_{\mathbb{\ell}}$, then $R_{\mathbb{\ell}}=\mathbb{Z}\big[[W_{1,2k}]_{i\in I,0\leq k\leq \ell}\big]$, where  $W_{1,2k}$ are Kirillov-Reshetikhin modules with the highest l-weights
$m_{1,2k} = Y_{q^{2k}}.$

\begin{theorem}\cite{H0}
In the $sl_{2}$ case, the deformed Grothendieck ring $Rep_{t,n}$ is a $\Z[q^{\pm1/2}]$-algebra generated by $[W_{1,0}], [W_{1,2}], \cdot\cdot\cdot, [W_{1,2n}]$ with relations
$$[W_{1,l_{1}}]\ast[W_{1,l_{2}}] = q^{r}[W_{1,l_{2}}]\ast[W_{1,l_{1}}], if \ l_{1} \geq l_{2} \ and \ l_{1} \neq l_{1} + 2, where \ r = (-1)^{(l_{1} - l_{2})/2}.$$
$$[W_{1,l}]\ast[W_{1,l - 2}] = q^{-1}[W_{1,l-2}]\ast[W_{1,l}] + (1 - q^{-1}). $$
\end{theorem}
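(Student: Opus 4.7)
\emph{Plan of proof.} The approach is to realise $Rep_{t,n}$ inside a quantum torus of $(q,t)$-character Laurent polynomials, compute the image of each Kirillov--Reshetikhin generator $[W_{1,2k}]$ explicitly, and then verify the two families of relations by direct calculation in the quantum torus.

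First, I would recall Hernandez's quantum torus $\mathcal{Y}_{t,n}$ of \cite{H0} in which $(q,t)$-characters live: it is a $\mathbb{Z}[q^{\pm 1/2}]$-algebra on generators $Y_{q^{2k}}^{\pm 1}$ for $0 \le k \le n$, equipped with the $t$-twisted product whose commutation exponents are read off from the inverse quantum Cartan matrix of $sl_2$. The resulting map $\chi_{q,t} \colon Rep_{t,n} \to \mathcal{Y}_{t,n}$ is an injective algebra homomorphism, so checking any identity in $Rep_{t,n}$ reduces to checking it for the images in $\mathcal{Y}_{t,n}$.

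Second, I would compute $\chi_{q,t}([W_{1,2k}])$ explicitly. Since $W_{1,2k}$ is two-dimensional with ordinary $q$-character $Y_{q^{2k}} + Y_{q^{2k+2}}^{-1}$, the $(q,t)$-character is obtained by renormalising the lower monomial by the power of $q^{1/2}$ fixed by bar-invariance. With this expression at hand, any product $[W_{1,l_1}] \ast [W_{1,l_2}]$ expands into exactly four cross terms, each of the form $Y_{q^a}^{\pm 1}Y_{q^b}^{\pm 1}$, and each admits a clean quasi-commutation rule in the torus.

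Third, I would perform the case analysis. If $l_1 > l_2$ with $l_1 \neq l_2 + 2$, none of the four cross terms contains a cancelling pair $Y_{q^a}Y_{q^a}^{-1}$, so the product is purely quasi-commutative; a bookkeeping of the bilinear form shows that the overall scalar is $q^{(-1)^{(l_1-l_2)/2}}$, yielding the first relation. In the adjacent case $l_1 = l$, $l_2 = l-2$, exactly one cross term contains the pair $Y_{q^l}Y_{q^l}^{-1}$, which collapses into a scalar; the difference between the two orderings of this cross term produces the inhomogeneous constant $(1-q^{-1})$, while the remaining three cross terms reassemble into $q^{-1}[W_{1,l-2}] \ast [W_{1,l}]$, establishing the second relation.

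The main obstacle is the fine-tuning of signs and half-integer powers of $q$ in the $(q,t)$-normalisation of each monomial: one must pin down exactly which power of $q^{1/2}$ decorates $Y_{q^{2k+2}}^{-1}$ in $\chi_{q,t}([W_{1,2k}])$ so that the cross-term contributions combine into a clean power $q^{(-1)^{(l_1-l_2)/2}}$ in the far case and into the precise inhomogeneous term $(1-q^{-1})$ in the adjacent case. Once bar-invariance is imposed on each $\chi_{q,t}([W_{1,2k}])$, that normalisation is uniquely fixed and the remaining identities follow by a mechanical computation.
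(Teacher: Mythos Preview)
The paper does not prove this statement at all: it is quoted verbatim as a background result from Hernandez \cite{H0} and carries no argument in the present paper. So there is no ``paper's own proof'' to compare against; the authors simply import the presentation of $Rep_{t,n}$ as a given input and use it in Section~3 to match relations on the quantum cluster side.

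Your proposal is a reasonable sketch of how the result is obtained in the cited source: embed $Rep_{t,n}$ into Hernandez's $t$-deformed quantum torus via the injective $(q,t)$-character map, write each $[W_{1,2k}]$ as the bar-invariant two-term sum coming from the ordinary $q$-character $Y_{q^{2k}} + Y_{q^{2k+2}}^{-1}$, and then compare the two orderings of a product term by term. The separation into the ``far'' case (pure $q$-commutation) and the ``adjacent'' case (one collapsing cross term producing the inhomogeneous $1-q^{-1}$) is exactly the right dichotomy. The only place where you would need to be careful in a full write-up is the one you already flag: pinning down the precise half-integer power of $q$ on the lower monomial via bar-invariance, and tracking the bilinear form $\mathcal{N}(i,a;j,b)$ from \cite{H0} so that the four cross-term exponents really do add up to $(-1)^{(l_1-l_2)/2}$ rather than, say, its negative. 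That is a genuine computation, not a formality, but once done the rest is mechanical as you say.
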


\section{Main Results}\label{s3}

In this section, we construct a quantum cluster algebra structure on the deformed Grothendieck ring  of $\CC_{n}$  for the Lie algebra $sl_2$. To be easily reading, we give an example for a special case.

The $\Gamma_{n}$ is the quiver
\[
\begin{matrix} n+1& \rightarrow & n &\rightarrow & \cdots&\rightarrow & 1
\end{matrix},
\]

where
$n+1$ is the frozen point.

 Let $\widetilde{B_{n}}$ is a matrix correspond to $\Gamma_{n}$ , i.e
$$\widetilde{B_{n}} = \left(
                        \begin{array}{ccccc}
                          0 & -1 & 0 & \cdot\cdot\cdot & 0 \\
                          1 &0 & -1 & \cdot\cdot\cdot & 0 \\
                          0 & 1 & 0 & \cdot\cdot\cdot & 0 \\
                          0 & 0& 1 & \cdot\cdot\cdot & 0 \\
                          \vdots &\vdots & \vdots  & \vdots  & \vdots  \\
                          0 & 0 & 0 & \cdot\cdot\cdot & 1 \\
                        \end{array}
                      \right)_{(n+1,n)}
$$

Let $\Lambda_{n} = (\lambda_{ij})_{(n+1,n+1)}$  be the skew matrix  defined by

$$ \lambda_{ij} =\left\{\begin{array}{ll}
(-1)^{i+j+1} & \text{if $j > i$ and i is odd,} \\[.05in]
0
 & \text{if $j > i$ and i is even ,
 }  \\[.05in]
 - \lambda_{ji} &\text{if $j < i$.}
 \end{array}   \right.
$$
That is
$$\Lambda = \left(
    \begin{array}{ccccccc}
      0 & -1 & 0 & -1 & 0 & -1 & \cdot\cdot\cdot \\
      1 & 0 & 0 & 0 & 0 & 0 & \cdot\cdot\cdot \\
      0 & 0 & 0 & -1 & 0 & -1 & \cdot\cdot\cdot \\
      1 & 0 & 1 & 0 & 0 & 0 & \cdot\cdot\cdot \\
      \vdots& \vdots & \vdots & \vdots & \vdots & \vdots & \vdots\\
    \end{array}
  \right)_{(n+1,n+1)}.
$$
It is easy to see $(\Lambda_{n},\widetilde{B}_{n})$ is compatible.

Let $\A_{n} = \A(\Gamma_{n})$  be the quantum cluster algebra associated with a  pair$(\Lambda_{n},\widetilde{B}_{n})$. The ambient field $F$ of fractions of the quantum torus with generators $Y_{1}, Y_{2}, \cdots, Y_{n+1}$ satisfying relations $Y_{i}Y_{j} = q^{\lambda_{ij}}Y_{j}Y_{i}$.  
For $0 \leq i \leq n-1$, let $X_{2i}=\mu_{n-i}(Y_{n-i})$, and $X_{2n} = Y_{1}$.

\begin{proposition}\label{equithm}
 $\A_n $ as a  $\Z[q^{\pm1/2}]$ algebra can be generated by $\{X_{2i} \ | \ 0 \leq i \leq n\}.$
\end{proposition}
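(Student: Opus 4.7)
The plan is to show that the $\Z[q^{\pm 1/2}]$-subalgebra $\mathcal{B} \subseteq \A_n$ generated by $\{X_{2i}\}_{0 \le i \le n}$ coincides with $\A_n$. Since $\A_n$ is, by definition, the subalgebra of the ambient skew-field generated by the union of all cluster variables from all seeds together with the frozen $Y_{n+1}$, it is enough to prove that every cluster variable and $Y_{n+1}$ already lies in $\mathcal{B}$.

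The first step recovers the entire initial cluster (and the frozen variable) from $\{X_{2i}\}$. Each column $k$ of $\widetilde{B}_n$ has only two nonzero entries, namely $b_{k-1,k} = -1$ and $b_{k+1,k} = 1$, so the quantum exchange relation defining $X_{2(n-k)} = \mu_k(Y_k)$ in the initial seed takes the form
\begin{equation*}
Y_k \cdot X_{2(n-k)} \;=\; q^{\alpha_k}\, Y_{k+1} \;+\; q^{\beta_k}\, Y_{k-1},
\end{equation*}
for half-integers $\alpha_k, \beta_k$ that can be read off from $\Lambda_n$ (with the convention $Y_0 = 1$ when $k=1$). Starting from $Y_1 = X_{2n}\in\mathcal{B}$ and iterating $k=1,2,\dots,n$ solves the recurrence and exhibits each of $Y_2, Y_3, \ldots, Y_n, Y_{n+1}$ as a polynomial in $\{X_{2i}\}$ with coefficients in $\Z[q^{\pm 1/2}]$, placing the initial cluster (and the frozen variable) inside $\mathcal{B}$.

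The second step extends this inclusion to every remaining cluster variable. Since $\widetilde{B}_n$ is of finite type $A_n$, $\A_n$ contains only finitely many cluster variables, each reachable from the initial seed by a finite sequence of mutations. Proceeding by induction on the length of such a sequence, the new cluster variable $Z$ produced at each mutation in direction $k$ satisfies a quantum exchange relation
\begin{equation*}
X_k \cdot Z \;=\; q^{\gamma}\, M_+ \;+\; q^{\delta}\, M_-,
\end{equation*}
where $M_\pm$ are monomials in the other elements of the current seed, all of which lie in $\mathcal{B}$ by the induction hypothesis. The crux is to verify at each such step that the right-hand side is divisible, inside $\mathcal{B}$, by $X_k$, so that $Z$ itself lies in $\mathcal{B}$.

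Checking this divisibility uniformly, using only the explicit shape of $\Lambda_n$ and the $q$-commutations $Y_i Y_j = q^{\lambda_{ij}} Y_j Y_i$, is the main obstacle of the proof; in small cases it rests on clean factorizations of the numerator after substituting the Step~1 expressions for the $Y_i$'s. Alternatively, once one identifies each $X_{2i}$ with the class of the Kirillov--Reshetikhin module $[W_{1,2i}]$, the required cancellations are guaranteed by the presentation of the deformed Grothendieck ring recalled in the theorem of Section~II, giving an a~posteriori route to the same conclusion.
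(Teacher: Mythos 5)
Your Step 1 is sound and is exactly what the paper does: each exchange relation at the initial seed has the form $X_{2(n-k)}Y_k = q^{\alpha_k}Y_{k+1}+q^{\beta_k}Y_{k-1}$, which you solve for $Y_{k+1}$ (never for an $X$), so no division is needed and $Y_2,\dots,Y_{n+1}$ land in the subalgebra $\mathcal B$ generated by the $X_{2i}$.

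Step 2, however, contains a genuine gap that you yourself flag but do not close. Inducting over arbitrary mutation sequences forces you to pass from $X_k\cdot Z = q^{\gamma}M_+ + q^{\delta}M_-$ to $Z\in\mathcal B$, i.e.\ to prove that the exchange binomial is divisible by $X_k$ \emph{inside} $\mathcal B$ (not merely in the ambient skew-field, where $Z=X_k^{-1}(\cdots)$ trivially exists). This divisibility is the entire content of the claim at that stage, and no mechanism is given for it; "clean factorizations in small cases" is not an argument for general $n$. Your proposed fallback --- identifying $X_{2i}$ with $[W_{1,2i}]$ and invoking the presentation of $Rep_{t,n}$ --- is circular in the logic of the paper, since that identification is precisely Theorem 3.2, which is stated \emph{after} and independently of this proposition. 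The paper sidesteps the division problem by a different induction: it applies the specific composite mutation $\mu=\mu_n\mu_{n-1}\cdots\mu_1$, uses the T-system to identify the resulting cluster with the Kirillov--Reshetikhin classes $(W_{1,2n-2},W_{2,2n-4},\dots,W_{n,0},W_{n+1,0})$, observes that this seed has the same quiver shape with the relevant one-step mutations already known to be among the $X_{2i}$, and then reruns the Step-1 recurrence (again solving for the next cluster variable, so only polynomial operations occur). Since in finite type $A_n$ the cluster variables are exhausted by the seeds arising in this way (via Leclerc's parametrization by the $[W_{i,j}]$), the induction terminates. To repair your proof you would either need to adopt this seed-by-seed recurrence or supply an actual proof of the divisibility claim; as written, Step 2 does not go through.
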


\begin{proof}
Since quantum clusters are mutation equivalence if and only if the correspondence clusters are mutation equivalence,
the number of the quantum cluster variables is equal to the number of the cluster variables. The cluster variable is one to one correspondence to the set $\{[W_{i,j}] | 1\leq i \leq n+1 , 0 \leq j \leq 2n - 2i +2, {\rm where }~ j~ {\rm is ~ even} \}$\cite{L1}.

By the definition of the quantum cluster algebra, we have the following relations:
$$X_{2n-2}X_{2n} = q^{1/2}Y_{2} + 1;$$
$$X_{2n-2i}Y_{i} = q^{-1/2}Y_{i-1} + Y_{i+1},{\rm if~ }i~ {\rm is~ even};$$
$$X_{2n-2i}Y_{i} = Y_{i-1} + q^{1/2}Y_{i+1},{\rm if} ~ i~ {\rm is~ odd};$$

By the above relations, it is easy to see that $Y_{i}(1 \leq i \leq n) \in \Z[X_{2i}(0 \leq i \leq n+1),q^{\pm1/2}]$.
For example, \ $Y_{3} = X_{2n-4}Y_{2} + q^{-1/2}Y_{1} = q^{-1/2}(X_{2n-4}X_{2n-2}X_{2n} - X_{2n-4} + X_{2n}).$

 Set $\mu = \mu_{n}\mu_{n-1}\cdots\mu_{1}$, where $\mu_{i}$ is the mutation at the direction i, $(Y_{1}^{'}, Y_{2}^{'},\cdot\cdot\cdot,Y_{n}^{'},Y_{n+1}) = \mu(Y_{1}, Y_2, \cdots,Y_{n+1})$ , $Y_{i}^{''} = \mu_{i}(Y_i^{'})$ . By the mentioned above and the T-system, $(Y_{1}^{'}, Y_{2}^{'},\cdot\cdot\cdot,Y_{n}^{'},Y_{n+1})$ is correspondence to  $(W_{1,2n-2}, W_{2,2n-4}, \cdot\cdot\cdot, W_{n,0}, W_{n+1,0})$.  So we have $Y_{i}^{''} = X_{2n-2-2i}(1 \leq i \leq n-1)$, \ $Y_{1}^{'} = X_{2n-2}$. Similarly, we have
$Y_{i}^{'}(1 \leq i \leq n) \in \Z[X_{2i}(0 \leq i < n),q^{\pm1/2}].$

By the induction,\ we have that any quantum cluster variable belongs to $Z[X_{2i}(0 \leq i \leq n),q^{\pm1/2}].$
\end{proof}

\begin{theorem}\emph{}\label{thm}
The map
\[
\begin{array}{ccc}
  X_{i} \mapsto [W_{1,i}]
\end{array}
\]
extends to a ring isomorphism $\iota_{n}$  from the quantum cluster algebra $\A_{n}$
to the deformed Grothendieck ring $Rep_{t,n}$ of $\CC_n$.
\end{theorem}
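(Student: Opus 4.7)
The plan is to leverage Proposition~\ref{equithm} together with the presentation of $Rep_{t,n}$ recalled at the end of Section~\ref{s2}. Since $\A_n$ is generated as a $\Z[q^{\pm 1/2}]$-algebra by $X_0,X_2,\ldots,X_{2n}$ by Proposition~\ref{equithm}, and $Rep_{t,n}$ admits a presentation on generators $[W_{1,2i}]$ with two explicit families of relations ($q$-commutation for non-adjacent indices and an exchange identity for adjacent ones), it suffices to verify that the cluster variables $X_{2i}$ satisfy these same relations in $\A_n$. Such a verification will promote $X_i\mapsto[W_{1,i}]$ to a $\Z[q^{\pm 1/2}]$-algebra homomorphism; surjectivity is then immediate, and injectivity will follow by specialisation at $q=1$.

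First I would write each generator concretely: since $X_{2n}=Y_1$ and $X_{2i}=\mu_{n-i}(Y_{n-i})$ is a single mutation of the initial seed, the quantum exchange formula makes each $X_{2i}$ a binomial in the $Y_j$'s with $q$-exponents dictated by $\Lambda_n$. Next I would check $X_{2i}\ast X_{2j}=q^{(-1)^{i-j}}X_{2j}\ast X_{2i}$ for $|i-j|\geq 2$, which reduces to evaluating the skew-form $\Lambda_n$ on the four pairs of lattice vectors supporting the two binomials; the alternating pattern in $\Lambda_n$ (depending on parity of the row index) is precisely engineered to yield the sign $(-1)^{(l_1-l_2)/2}$ demanded by the presentation. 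For adjacent generators I would check $X_{2i}\ast X_{2i-2}=q^{-1}X_{2i-2}\ast X_{2i}+(1-q^{-1})$: the base case $i=n$ is visible from $X_{2n-2}X_{2n}=q^{1/2}Y_2+1$ already established in the proof of Proposition~\ref{equithm}, and the remaining cases follow by repeating the same computation along the mutation sequence $\mu_n\mu_{n-1}\cdots\mu_1$ producing the intermediate seeds $(Y_1',\ldots,Y_n',Y_{n+1})$ and the variables $Y_i''=X_{2n-2-2i}$ appearing there.

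I expect the main obstacle to be the cross-bracket bookkeeping in the $q$-commutativity check: the product of two binomial cluster variables has four monomial contributions, and one must show that their $\Lambda_n$-brackets collapse into a single scalar factor on both sides of the equation. The ``$i$ odd / $i$ even'' split in the definition of $\Lambda_n$ is exactly what makes these cancellations possible, and a careful induction on $|i-j|$ should push this through uniformly. Finally, for injectivity both $\A_n$ and $Rep_{t,n}$ are free $\Z[q^{\pm 1/2}]$-modules whose $q=1$ specialisations are the commutative cluster algebra on $\Gamma_n$ and the ordinary Grothendieck ring $R_n$; these are already isomorphic via the classical cluster categorification result cited in the proof of Proposition~\ref{equithm}, so a flat-base-change argument will promote the classical isomorphism to $\iota_n$.
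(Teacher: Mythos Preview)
Your plan is essentially the paper's own proof: express each $X_{2(n-i)}$ as a binomial in the initial cluster variables $Y_j$ and then verify, by a parity case analysis, both the adjacent exchange relation $X_{2(n-i)}X_{2(n-i-1)}=q^{-1}X_{2(n-i-1)}X_{2(n-i)}+(1-q^{-1})$ and the non-adjacent $q$-commutation $X_{2(n-j)}X_{2(n-i)}=q^{(-1)^{j-i+1}}X_{2(n-i)}X_{2(n-j)}$. One small logical point: verifying that the $X_{2i}$ satisfy the presenting relations of $Rep_{t,n}$ yields a surjection $Rep_{t,n}\to\A_n$ (sending $[W_{1,i}]\mapsto X_i$), not directly the map $\iota_n$ in the stated direction; your $q=1$ specialisation argument then upgrades this to an isomorphism and hence supplies $\iota_n$ as its inverse --- this injectivity step is something the paper itself does not make explicit.
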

\begin{proof}
First, if i is odd,
\[
\begin{array}{ll}
  & X_{2(n - i)}X_{2(n - i - 1)} \\
  = & (Y_{i - 1} + q^{1/2}Y_{i+1})Y_{i}^{-1}(q^{-1/2}Y_{i} + Y_{i + 2})Y_{i+1}^{-1} \\
  = & q^{-1/2}Y_{i - 1}Y_{i+1}^{-1} + 1 + Y_{i-1}Y_{i}^{-1}Y_{i+2}Y_{i+1}^{-1} + q^{-1/2}Y_{i}^{-1}Y_{i+2}.
\end{array}
\]

\[
\begin{array}{ll}
   & X_{2(n - i - 1)}X_{2(n - i)} \\
  = & (q^{-1/2}Y_{i} + Y_{i + 2})Y_{i+1}^{-1}(Y_{i - 1} + q^{1/2}Y_{i+1})Y_{i}^{-1} \\
  = & q^{1/2}Y_{i - 1}Y_{i+1}^{-1} + 1 + qY_{i-1}Y_{i}^{-1}Y_{i+2}Y_{i+1}^{-1} + q^{1/2}Y_{i}^{-1}Y_{i+2}.
\end{array}
\]

So we have$X_{2(n - i)}X_{2(n - i - 1)} = q^{-1}X_{2(n - i - 1)}X_{2(n - i)} + (1 - q^{-1})$.

If i is even,
\[
\begin{array}{ll}
   & X_{2(n - i)}X_{2(n - i - 1)}\\
  = & (q^{-1/2}Y_{i - 1} + Y_{i+1})Y_{i}^{-1}(Y_{i} + q^{1/2}Y_{i + 2})Y_{i+1}^{-1} \\
  = & q^{-1/2}Y_{i - 1}Y_{i+1}^{-1} + 1 + Y_{i-1}Y_{i}^{-1}Y_{i+2}Y_{i+1}^{-1} + q^{-1/2}Y_{i}^{-1}Y_{i+2}.
\end{array}
\]

\[
\begin{array}{ll}
   & X_{2(n - i - 1)}X_{2(n - i)} \\
  = & (Y_{i} + q^{1/2}Y_{i + 2})Y_{i+1}^{-1}(q^{-1/2}Y_{i - 1} + Y_{i+1})Y_{i}^{-1} \\
  = &  q^{1/2}Y_{i - 1}Y_{i+1}^{-1} + 1 + qY_{i-1}Y_{i}^{-1}Y_{i+2}Y_{i+1}^{-1} + q^{1/2}Y_{i}^{-1}Y_{i+2}.
\end{array}
\]

So we have$X_{2(n - i)}X_{2(n - i - 1)} = q^{-1}X_{2(n - i - 1)}X_{2(n - i)} + (1 - q^{-1})$.

Secondly,
\[
\begin{array}{ll}
   & X_{2(n - i)}X_{2(n - j)}\\
  = & (Y_{i - 1} + q^{1/2}Y_{i+1})Y_{i}^{-1}(Y_{j} + q^{1/2}Y_{j + 1})Y_{j}^{-1}\\
  = & Y_{i - 1}Y_{i}^{-1}Y_{j-1}Y_{j}^{-1} + q^{1/2}Y_{i+1}Y_{i}^{-1}Y_{j-1}Y_{j}^{-1}
 + q^{1/2}Y_{i - 1}Y_{i}^{-1}Y_{j+1}Y_{j}^{-1} + qY_{i+1}Y_{i}^{-1}Y_{j+1}Y_{j}^{-1}.
\end{array}
\]

if i and j is odd and $i < j$;
\[
\begin{array}{ll}
   & X_{2(n - j)}X_{2(n - i)} \\
 = & Y_{j - 1} + q^{1/2}Y_{j+1})Y_{j}^{-1}(Y_{i} + q^{1/2}Y_{i + 1})Y_{i}^{-1} \\
  = & Y_{j-1}Y_{j}^{-1}Y_{i - 1}Y_{i}^{-1} + q^{1/2}Y_{j-1}Y_{j}^{-1}Y_{i+1}Y_{i}^{-1}
 + q^{1/2}Y_{j+1}Y_{j}^{-1}Y_{i - 1}Y_{i}^{-1} + qY_{j+1}Y_{j}^{-1}Y_{i+1}Y_{i}^{-1} \\
  = & q^{-1}(Y_{i - 1}Y_{i}^{-1}Y_{j-1}Y_{j}^{-1} + q^{1/2}Y_{i+1}Y_{i}^{-1}Y_{j-1}Y_{j}^{-1}
 + q^{1/2}Y_{i - 1}Y_{i}^{-1}Y_{j+1}Y_{j}^{-1} + qY_{i+1}Y_{i}^{-1}Y_{j+1}Y_{j}^{-1}) \\
  = & q^{-1}X_{2(n - i)}X_{2(n - j)} \\
  = & q^{(-1)^{j - i + 1}}X_{2(n - i)}X_{2(n - j)}.
\end{array}
\]
 If i is odd,j is even and $i +1 < j$;
 \[
 \begin{array}{ll}
    & X_{2(n - j)}X_{2(n - i)} \\
   = & (Y_{j - 1} + q^{1/2}Y_{j+1})Y_{j}^{-1}(Y_{i} + q^{1/2}Y_{i + 1})Y_{i}^{-1} \\
   = & Y_{j-1}Y_{j}^{-1}Y_{i - 1}Y_{i}^{-1} + q^{1/2}Y_{j-1}Y_{j}^{-1}Y_{i+1}Y_{i}^{-1}
 + q^{1/2}Y_{j+1}Y_{j}^{-1}Y_{i - 1}Y_{i}^{-1} + qY_{j+1}Y_{j}^{-1}Y_{i+1}Y_{i}^{-1} \\
   = & q(Y_{i - 1}Y_{i}^{-1}Y_{j-1}Y_{j}^{-1} + q^{1/2}Y_{i+1}Y_{i}^{-1}Y_{j-1}Y_{j}^{-1}
 + q^{1/2}Y_{i - 1}Y_{i}^{-1}Y_{j+1}Y_{j}^{-1} + qY_{i+1}Y_{i}^{-1}Y_{j+1}Y_{j}^{-1}) \\
   = & qX_{2(n - i)}X_{2(n - j)} \\
   = &q^{(-1)^{j - i + 1}}X_{2(n - i)}X_{2(n - j)}
 \end{array}
 \]

 If i and j are even,and $i < j$;

 \[
 \begin{array}{ll}
    & X_{2(n - j)}X_{2(n - i)} \\
   = & (Y_{j - 1} + q^{1/2}Y_{j+1})Y_{j}^{-1}(Y_{i} + q^{1/2}Y_{i + 1})Y_{i}^{-1} \\
   = & Y_{j-1}Y_{j}^{-1}Y_{i - 1}Y_{i}^{-1} + q^{1/2}Y_{j-1}Y_{j}^{-1}Y_{i+1}Y_{i}^{-1}
 + q^{1/2}Y_{j+1}Y_{j}^{-1}Y_{i - 1}Y_{i}^{-1} + qY_{j+1}Y_{j}^{-1}Y_{i+1}Y_{i}^{-1} \\
   = & q^{-1}(Y_{i - 1}Y_{i}^{-1}Y_{j-1}Y_{j}^{-1} + q^{1/2}Y_{i+1}Y_{i}^{-1}Y_{j-1}Y_{j}^{-1}
 + q^{1/2}Y_{i - 1}Y_{i}^{-1}Y_{j+1}Y_{j}^{-1} + qY_{i+1}Y_{i}^{-1}Y_{j+1}Y_{j}^{-1}) \\
   = & q^{-1}X_{2(n - i)}X_{2(n - j)} \\
   = & q^{(-1)^{j - i + 1}}X_{2(n - i)}X_{2(n - j)}
 \end{array}
 \]

 If i is even,j is odd,and $i+1 < j$;

 \[
 \begin{array}{ll}
    & X_{2(n - j)}X_{2(n - i)} \\
   = & (Y_{j - 1} + q^{1/2}Y_{j+1})Y_{j}^{-1}(Y_{i} + q^{1/2}Y_{i + 1})Y_{i}^{-1} \\
   = & Y_{j-1}Y_{j}^{-1}Y_{i - 1}Y_{i}^{-1} + q^{1/2}Y_{j-1}Y_{j}^{-1}Y_{i+1}Y_{i}^{-1}
 + q^{1/2}Y_{j+1}Y_{j}^{-1}Y_{i - 1}Y_{i}^{-1} + qY_{j+1}Y_{j}^{-1}Y_{i+1}Y_{i}^{-1} \\
   = & q(Y_{i - 1}Y_{i}^{-1}Y_{j-1}Y_{j}^{-1} + q^{1/2}Y_{i+1}Y_{i}^{-1}Y_{j-1}Y_{j}^{-1}
 + q^{1/2}Y_{i - 1}Y_{i}^{-1}Y_{j+1}Y_{j}^{-1} + qY_{i+1}Y_{i}^{-1}Y_{j+1}Y_{j}^{-1}) \\
   = & qX_{2(n - i)}X_{2(n - j)} \\
   = & q^{(-1)^{j - i + 1}}X_{2(n - i)}X_{2(n - j)}
 \end{array}
 \]

 So we have $X_{2(n - i)}X_{2(n - j)} = q^{j - i}X_{2(n - j)}X_{2(n - i)}$.

Thus,we get the conclusion.
\end{proof}

\subsection{Example}
The quiver $\Gamma_{2}$ are defined by
\[
\begin{matrix} 1& \leftarrow & 2 &\leftarrow & 3
\end{matrix}.
\]

where
$3$ is the frozen point.

$$\widetilde{B}_{2}=\left(
                      \begin{array}{cc}
                        0 & -1 \\
                        1 & 0 \\
                        0 & 1 \\
                      \end{array}
                    \right)
, \qquad
\Lambda_{2}=\left(
              \begin{array}{ccc}
                0 & -1 & 0 \\
                1 & 0 & 0 \\
                0 & 0 & 0 \\
              \end{array}
            \right)
$$

Since
$$\widetilde{B}_{2}^{T}\Lambda_{2} = \left(
                                      \begin{array}{ccc}
                                        0 & 1 & 0 \\
                                        -1 & 0 & 1 \\
                                      \end{array}
                                    \right)
              \left(
              \begin{array}{ccc}
                0 & -1 & 0 \\
                1 & 0 & 0 \\
                0 & 0 & 0 \\
              \end{array}
            \right)
            =\left(
               \begin{array}{ccc}
                 1 & 0 & 0 \\
                 0 & 1 & 0 \\
               \end{array}
             \right),
$$
  that is, $(\Lambda_{2},\widetilde{B}_{2})$ is compatible.

Let $\A_{n} = \A(\Gamma_{2})$  be the quantum cluster algebra associated with a  pair$(\Lambda_{2},\widetilde{B}_{2})$. By the definition of the quantum cluster algebra,we have $X_{0} = (q^{-1/2}Y_{1} + Y_{3})Y_{2}^{-1}$, \ $X_{2} = Y_{1}^{-1} + q^{1/2}Y_{2}Y_{1}^{-1}$, \ $X_{4} = Y_{1}$, where $(Y_{1}, Y_{2}, Y_{3})$ is the initial quantum cluster.
\begin{proposition}\label{equithm}
 $\A_2$ as a  $\Z[q^{\pm1/2}]$ algebra can be generated by $X_{0}, X_{2}, X_{4}.$
\end{proposition}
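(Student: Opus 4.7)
The plan is to specialize the argument of Proposition~\ref{equithm} to $n = 2$, where every formula can be made completely explicit.

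First, I would invert the formulas for $X_0$, $X_2$, $X_4$ displayed just before the proposition to express the initial cluster as polynomials in $X_0, X_2, X_4$ over $\Z[q^{\pm 1/2}]$. Since $X_4 = Y_1$, this handles $Y_1$ immediately. Right-multiplying $X_2 = Y_1^{-1} + q^{1/2} Y_2 Y_1^{-1}$ by $X_4$ gives $X_2 X_4 = 1 + q^{1/2} Y_2$, so $Y_2 = q^{-1/2}(X_2 X_4 - 1)$. Similarly, right-multiplying $X_0 = (q^{-1/2} Y_1 + Y_3) Y_2^{-1}$ by $Y_2$ and substituting the expression for $Y_2$ yields
\[
Y_3 \;=\; X_0 Y_2 - q^{-1/2} Y_1 \;=\; q^{-1/2}(X_0 X_2 X_4 - X_0 - X_4).
\]
In particular the frozen element $Y_3 \in \textbf{C}$ already lies in the subalgebra generated by $X_0, X_2, X_4$, and so do $Y_1$ and $Y_2$.

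Second, I would show that every remaining cluster variable also lies in this subalgebra. The cluster algebra $\A_2$ is of finite type $A_2$ (two mutable vertices plus one frozen), so there are only five mutable cluster variables altogether. Two of them, $X_2 = \mu_1(Y_1)$ and $X_0 = \mu_2(Y_2)$, come from the initial seed by a single mutation and are generators by definition. The other three arise from the seeds $(X_2, Y_2, Y_3)$ and $(Y_1, X_0, Y_3)$ by further mutations. For each such step I would update $\widetilde{B}$ and $\Lambda$ using the rules $\widetilde{B}' = E_\varepsilon \widetilde{B} F_\varepsilon$ and $\Lambda' = E_\varepsilon^T \Lambda E_\varepsilon$, write down the resulting quantum exchange relation, and verify directly that the new cluster variable is a polynomial over $\Z[q^{\pm 1/2}]$ in the current cluster. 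Since by the first step every current cluster already lies in $\Z[q^{\pm 1/2}][X_0, X_2, X_4]$, the new variable does as well. Because the $A_2$ mutation pattern closes up after finitely many steps, this is a small explicit verification rather than a genuine induction.

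The main obstacle is the book-keeping: updating $\widetilde{B}$ and $\Lambda$ correctly after each mutation and tracking the half-integer powers of $q$ that arise from the twisted multiplication, so that each new variable ends up expressed \emph{polynomially} --- not merely as a Laurent polynomial requiring inverses of the current cluster entries --- in $X_0, X_2, X_4$. For $n = 2$ this exactly mirrors the T-system argument of Proposition~\ref{equithm} and reduces to a handful of short, direct computations.
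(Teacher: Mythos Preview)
Your approach is essentially the same as the paper's: express the initial cluster $Y_1,Y_2,Y_3$ as polynomials in $X_0,X_2,X_4$, then check the remaining cluster variables via exchange relations. The paper does exactly this, writing $Y_2=q^{1/2}(X_4X_2-1)$, $Y_3=X_0Y_2-q^{-1/2}X_4$, and $Y_2'=q^{1/2}(X_0X_2-1)$ for the one remaining variable. Your only slip is the count: of the five mutable cluster variables $Y_1,Y_2,X_2,X_0,Y_2'$, four are already covered ($Y_1=X_4$ and $Y_2$ in your first step, $X_2$ and $X_0$ as generators), so there is just \emph{one} further variable to handle, not three.
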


\begin{proof}
The detailed expressions of quantum cluster variables are shown as follows:
$Y_{1}, ~Y_{2},~ Y_{3},$~
$Y_{1}^{'}=Y_{1}^{-1} + q^{-1/2}Y_{1}^{-1}Y_{2} = X_{2},~$
$Y_{2}^{'}=q^{1/2}Y_{3}Y_{1}^{-1}Y_{2}^{-1} + Y_{2}^{-1} + Y_{1}^{-1}Y_{3}$,~
$ Y_{1}^{''}=Y_{3}Y_{2}^{-1} + q^{-1/2}Y_{1}Y_{2}^{-1} = X_{0}$.

Because the relation $Y_{1}Y_{2}=q^{-1}Y_{2}Y_{1}$, we have:
$$Y_{1}Y_{1}^{'} = 1+q^{-1/2}Y_{2},$$
$$Y_{1}^{''}Y_{1}^{'} = 1+q^{-1/2}Y_{2}^{'},$$
$$X_{0}Y_{2} = q^{-1/2}Y_{1} + Y_{3}.$$
That is:
$$Y_{2}=q^{1/2}(Y_{1}Y_{1}^{'}-1) = q^{1/2}(X_{4}X_{2} - 1),$$
$$Y_{2}^{'} = q^{1/2}(Y_{1}^{''}Y_{1}^{'} - 1) = q^{1/2}(X_{0}X_{2} - 1),$$
$$Y_{3} = X_{0}Y_{2} - q^{-1/2}X_{4}.$$
So we get the conclusion.

\end{proof}

\begin{theorem}\emph{}\label{thm}
The map
\[
\begin{array}{ccc}
  X_{4} \mapsto [W_{1,4}], & X_{0} \mapsto [W_{1,0}], & X_{2} \mapsto [W_{1,2}]
\end{array}
\]
extends to a ring isomorphism $\iota_{2}$ from the quantum cluster algebra $\A_2$
to the deformed Grothendieck ring $Rep_{t,2}$ of $\CC_2$.
\end{theorem}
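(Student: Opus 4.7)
The plan is to reduce the claim to a finite list of commutation checks in the quantum torus, after which the Proposition just proved promotes the assignment on generators to a ring isomorphism.

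First, by the preceding Proposition, $\A_2$ is generated as a $\Z[q^{\pm 1/2}]$-algebra by $X_0, X_2, X_4$. By the theorem of \cite{H0} recalled in Section \ref{s2}, the ring $Rep_{t,2}$ is \emph{presented} on the generators $[W_{1,0}], [W_{1,2}], [W_{1,4}]$ subject to three explicit relations: two near-neighbour relations
\[
\begin{array}{l}
 [W_{1,4}]*[W_{1,2}] = q^{-1}\,[W_{1,2}]*[W_{1,4}] + (1-q^{-1}), \\[.05in]
 [W_{1,2}]*[W_{1,0}] = q^{-1}\,[W_{1,0}]*[W_{1,2}] + (1-q^{-1}),
\end{array}
\]
and one skip relation $[W_{1,4}]*[W_{1,0}] = q\,[W_{1,0}]*[W_{1,4}]$ coming from $r = (-1)^{(4-0)/2} = 1$. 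It therefore suffices to check these three relations for the elements $X_4, X_2, X_0$ of $\A_2$; this produces a ring homomorphism $Rep_{t,2} \to \A_2$ which is inverse to $\iota_2$ on generators, hence $\iota_2$ is a well-defined isomorphism.

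Second, the verifications are a direct computation. Substitute the explicit Laurent expressions $X_4 = Y_1$, $X_2 = Y_1^{-1} + q^{1/2} Y_2 Y_1^{-1}$, $X_0 = (q^{-1/2}Y_1 + Y_3)Y_2^{-1}$ obtained from the mutation formulas recorded in the Proposition, and use only the quantum torus relations determined by $\Lambda_2$, namely $Y_1 Y_2 = q^{-1} Y_2 Y_1$ and the centrality of $Y_3$. For each of the pairs $(X_4,X_2)$, $(X_2,X_0)$, $(X_4,X_0)$, expanding $X_i X_j$ and $X_j X_i$ produces two four-term sums in the $Y_k^{\pm 1}$'s; collecting terms, they differ by exactly the required $q$-scalar plus the additive constant correction $(1-q^{-1})$ in the near-neighbour cases and $0$ in the skip case. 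These are precisely the $n=2$ specialisations of the pairwise computations carried out in the proof of the preceding general theorem.

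The main subtle point is that the three relations above really form a \emph{complete} presentation of $Rep_{t,2}$, not merely a list of identities that happen to hold. Without that, the computation only yields a homomorphism in the direction $\A_2 \to Rep_{t,2}$, and injectivity would have to be argued separately (for instance by comparing bases indexed by Kirillov--Reshetikhin monomials on both sides). Here, however, completeness of the presentation is precisely the content of the cited theorem from \cite{H0}, so the relation check immediately upgrades to the desired isomorphism.
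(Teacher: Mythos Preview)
Your proposal takes essentially the same route as the paper: both arguments verify, by direct expansion in the quantum torus using only $Y_1Y_2=q^{-1}Y_2Y_1$ and the centrality of $Y_3$, the three identities
\[
X_4X_2=q^{-1}X_2X_4+(1-q^{-1}),\qquad X_2X_0=q^{-1}X_0X_2+(1-q^{-1}),\qquad X_4X_0=qX_0X_4,
\]
which is exactly the computation the paper writes out (with $X_4=Y_1$, $X_2=Y_1'$, $X_0=Y_1''$). You summarise the expansions where the paper performs them line by line, and you are more explicit than the paper about the logical role of the presentation theorem from \cite{H0}.

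One small slip in your last paragraph: the direction is off. The completeness of the presentation of $Rep_{t,2}$ is what lets the relation check produce the surjection $Rep_{t,2}\to\A_2$ (together with the Proposition); what it does \emph{not} by itself give is injectivity of that map, i.e.\ the well-definedness of $\iota_2$ in the other direction. That still requires an extra observation, for instance the basis comparison you mention parenthetically. The paper's own proof is equally silent on this point, so your argument is no weaker than the original.
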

\begin{proof}
Since

\[
\begin{array}{ll}
   & Y_{1}^{'}Y_{1}^{''} \\
  = & Y_{1}^{-1}Y_{3}Y_{2}^{-1} + q^{-1/2}Y_{1}^{-1}Y_{2}Y_{3}Y_{2}^{-1}
+ q^{-1}Y_{1}^{-1}Y_{2}Y_{1}Y_{2}^{-1} + q^{-1/2}Y_{2}^{-1}\\
  = & Y_{1}^{-1}Y_{2}^{-1}Y_{3} + q^{-1/2}Y_{1}^{-1}Y_{3} + q^{-1/2}Y_{2}^{-1} +  1
\end{array}
\]
and

\[
\begin{array}{ll}
   & Y_{1}^{''}Y_{1}^{'} \\
  = & Y_{3}Y_{2}^{1}Y_{1}^{-1} + q^{-1/2}Y_{1}Y_{2}^{-1}Y_{1}^{-1}
+ q^{-1/2}Y_{3}Y_{2}^{-1}Y_{1}^{-1}Y_{2} + q^{-1}Y_{1}Y_{2}^{-1}Y_{1}^{-1}Y_{2} \\
  = &  qY_{1}^{-1}Y_{2}^{-1}Y_{3} + q^{1/2}Y_{1}^{-1}Y_{3} + q^{1/2}Y_{2}^{-1} +  1 ,
\end{array}
\]
then we have
$$Y_{1}^{'}Y_{1}^{''} = q^{-1}Y_{1}^{''}Y_{1}^{'} + (1 - q^{-1}).$$

Since
$$Y_{1}^{'}Y_{1} = 1 + q_{1/2}Y_{1}^{-1}Y_{2}Y_{1} = 1 + q^{1/2}Y_{2}$$
and
$$Y_{1}Y_{1}^{'} = 1 + q^{-1/2}Y_{1},$$
then we have
$$Y_{1}Y_{1}^{'} = q^{-1}Y_{1}^{'}Y_{1} + (1 - q^{-1}).$$

Since
$$Y_{1}Y_{1}^{''} = Y_{1}Y_{3}Y_{2}^{-1} + q^{-1/2}Y_{1}^{2}Y_{2}^{-1} = Y_{1}Y_{2}^{-1}Y_{3} + q^{-1/2}Y_{1}^{2}Y_{2}^{-1}$$
and
$$Y_{1}^{''}Y_{1} = Y_{3}Y_{2}^{-1}Y_{1} + q^{-1/2}Y_{1}Y_{2}^{-1}Y_{2} = q^{-1}Y_{1}Y_{2}^{-1}Y_{3} + q^{-3/2}Y_{1}^{2}Y_{2}^{-1},$$
then we have
$$Y_{1}Y_{1}^{''} = qY_{1}^{''}Y_{1}.$$
so we get the conclusion.
\end{proof}

\end{document}